\documentclass[12pt]{amsart}

\usepackage{amstext,amssymb,amsmath,stmaryrd,amsthm}
\usepackage{geometry} 
\geometry{a4paper} 
\usepackage{color}
\usepackage{hyperref}
\makeindex

\numberwithin{equation}{section}

\newtheorem{theorem}{Theorem}[section]
\newtheorem{cor}[theorem]{Corollary}

\newtheorem{example}{Examples}[section]

\usepackage{amssymb}
\usepackage{amsmath}
\usepackage{color}
\usepackage{graphicx}
\bibliographystyle{plain}
\usepackage{comment}
\usepackage{float}
\usepackage{hyperref}


\parskip=1.5mm
\parindent0cm

\def\N{{\mathbb N}}

\def\B{{\mathcal B}}

\def\I{\textbf{I}}

\newcommand{\R}{\mathbb{R}}

\newcommand{\E}{\mathbb{E}}

\newcommand{\Ge}{\mathcal{L}}

\begin{document}

\title[Interacting systems of neurons beyond   uniform summability]{Interacting systems of infinite spiking neurons with weights beyond uniform summability}

\author{ Ioannis Papageorgiou }

 \thanks{\textit{Address:}  Universidade Federal do ABC (UFABC) - CMCC, Avenida dos Estados, 5001 - Santo Andre - Sao Paulo, Brasil.
\\ \text{\  \   \      } 
\textit{Email:}  i.papageorgiou@ufabc.edu.br, papyannis@yahoo.com  }

\keywords{{brain neuron networks,  Pure Jump Markov Processes}
\subjclass[2010]{  60K35,     60G99}  } 



\begin{abstract} 
We consider an infinite system of spiking neurons with a drift and both excitatory and inhibitory connections. We study conditions for non-explosiveness and the uniqueness of the invariant measure. In particular, we examine conditions that allow this infinite interacting system to go beyond the usual interactions of uniformly summable weights. As a result, we extend the Galves-L\"ocherbach model beyond the restrictive uniform summability of the model.
\end{abstract}
 
\date{}
\maketitle 
 
\section{Introduction}

 We study networks containing infinitely many interacting neurons. The activity of each neuron is described by the evolution of its membrane potential (see \cite{Tuc} for a thorough analysis). This evolution occasionally includes a brief depolarization known as a spike.
Spikes, also referred to as jumps (see \cite{H-K-L}, \cite{Kr}), represent a neuron's emission of an action potential (see \cite{L-M}). Each neuron, denoted as $i$, spikes at a rate $\phi_i(x)$, which depends on its membrane potential value $x$. The functions $\phi_i:\R_+ \rightarrow \R_+$ are called intensity functions. At the time of a spike, the neuron's potential is reset to a resting value, which is set to zero in this article. Simultaneously, neurons affected by this spike receive an additional amount of potential (either positive or negative), which is added to their membrane potential. These spikes are the only perturbations of the membrane potential that can be transmitted from one neuron to another through chemical synapses. We investigate systems of interacting particles with variable-length memory, which are systems that depend on the entire history of the system. Such systems have various applications, particularly in describing biological neural networks.

In this paper, we focus on the relationship between inhibitory and excitatory neurons. The balance between inhibitory and excitatory neurons is crucial, as there are examples of neurological networks that exhibit pathological behavior when the number of inhibitory neurons decreases.

One way to examine neural networks is by studying the behavior of individual neurons. Another approach is to analyze the interactions among neurons in the network. In the former case, Hawkes processes are typically used to describe the dynamics characterized by jump times, as seen in \cite{C17}, \cite{D-L-O}, \cite{D-O}, \cite{G-L}, \cite{H-L}, and \cite{H-R-R}. Since neurons reset to zero after spiking, these point processes describing these systems lose their memory.

In the second case, which also applies to the current work, it is possible to describe the activity of the network by modeling the entire evolution of the membrane potential of each neuron rather than just the jump times. In fact, the evolution of the network between jumps is described. In \cite{K-M-R}, \cite{Tu}, and \cite{Cot92}, inhibition models (based on \cite{Kr}) for a finite number of neurons are studied. Since the membrane potential of connected neurons is reduced each time a neuron spikes, the effect of drift is not as crucial in these models as in cases involving excitatory connections. In this paper, we aim to examine different combinations of inhibitory/excitatory connections and the necessary drift to prevent the model from exploding and to ensure the existence of at most one invariant measure in a network of infinitely many neurons. The dynamics of a finite neural network with inhibitory and excitatory connections are studied in \cite{TUROVA1997197}, while networks with only inhibitory or excitatory connections are examined in \cite{Me-Mo-Tu94}. In \cite{H-K-L}, the process describing this evolution follows a deterministic drift between the jumps. In this paper, we will consider both a pure jump process and one with a drift. The process is Markovian and belongs to the family of Piecewise Deterministic Markov Processes introduced by Davis in \cite{Davis84} and \cite{Davis93}. Such processes are widely used in probability modeling of biological or chemical phenomena (see \cite{C-D-M-R}, \cite{PTW-10}, and \cite{ABGKZ}). Furthermore, in \cite{C-D-L-O}, \cite{D-L}, \cite{Co}, \cite{Co2}, and \cite{D-R-O}, mean-field and hydrodynamic limits of the model were studied. In \cite{L17}, regularity properties of the invariant measure were investigated, while in \cite{H-P}, some inequalities were obtained as well. More recently, phase transition and metastability were studied in \cite{F-G-L}, \cite{A}, \cite{A1}, and \cite{A-P}. In \cite{Kr} and \cite{Lo}, no deterministic interactions have been considered.

The neural networks studied in this work consist of an infinite number of neurons that interact with each other through chemical synapses. A neuron's activity is explained by the evolution of its membrane potential. The neurons interact with each other when one neuron spikes, which can be described as a height-amplitude depolarization of its membrane potential. When a neuron, say $j$, spikes, any neuron $i$ connected to it receives an additional amount of membrane potential $W_{ji}\in \R$, called the synaptic weight. For each $j\neq i$, the synaptic weight $W_{ji}$ describes the sole influence of neuron $j$ on neuron $i$. As for the neuron that spikes, its membrane potential is reset to $0$. This description corresponds to the so-called Galves-L\"ocherbach model (G-L) introduced in \cite{G-L} to describe the interaction of brain neural networks. For the existence of the model in \cite{G-L}, the authors require that the synaptic weights satisfy the Property of Uniform Summability (PUS):

\begin{align}\label{Dob}\sup_{i\in \N} \sum_{j\in \N} \vert W_{ji} \vert<\infty.\end{align}

The purpose of this paper is to present conditions that will allow the existence of the model beyond this condition. Since the PUS involves the absolute values of the weights, it does not take into account the influence of negative weights that reduce the membrane potential of the interacting neurons. In this way, a system of inhibitory neurons that, by construction, can interact freely with other neurons is treated in \cite{G-L} as a system of excitatory neurons. Going beyond the uniform summability of (\ref{Dob}) allows us to study models in which a neuron can interact with an arbitrary number of other neurons with weights that do not decrease in absolute value.

The importance of considering both inhibitory and excitatory interactions in a manner where the positive and negative contributions to a neuron cancel each other comes from experimental observations on neural networks. Interruptions of patterned electrical signaling oscillations can be observed in various brain diseases such as epilepsy or Parkinson's disease (see \cite{C-M-T-T15}, \cite{BHJOT}). Neocortical and hippocampal networks are composed of a mixture of excitatory and inhibitory neurons, and excitatory and inhibitory synaptic coupling can have different influences on the synchronization of neural firing. The presence of excitation in such networks affects the presence and characteristics of synchronized oscillations (see \cite{A-E-V94}, \cite{Br2000}). Recurrent inhibition plays an important role in the generation of synchronized oscillations in different systems (see \cite{Br-Ha99}, \cite{Ma-La96}, \cite{Je-Tr-Wh95}). It has been shown that spiking activity is unstable if all neurons are excitatory, while spontaneous activity becomes self-stabilizing in the presence of local inhibition (see \cite{Am-Br97}). In the cortex, the excitatory inputs are largely canceled out by the inhibitory ones (\cite{Se-Ts95}, \cite{Am-Br97}, \cite{Sh-Ne94}). The total synaptic input to a cell would be overwhelmingly depolarizing or hyperpolarizing unless the activity of the excitatory and inhibitory populations dynamically adjusts itself so that the large total inhibitory input nearly cancels the large excitatory one. If inhibitory neurons are included, spontaneous rates can be stabilized. Stability requires that the average local excitatory and inhibitory synaptic inputs to every neuron in the network closely balance out (\cite{Am-Br97}, \cite{So-Vr96}). In particular, in the cortex, considerable excitation is balanced by just enough inhibition to hold the population in check (see \cite{Lu-Om-Si06}, \cite{Abe91}).

What is important is that the balance between inhibition and excitation does not refer only to the local network but also to the global one, which means that the mathematical model should also allow for strong synapses between distant neurons (see \cite{So-Vr96}), a property that the PUS does not allow. Although the study of networks of excitatory and inhibitory neurons in realistic cortical conditions highlights the importance of a balance initially at a local level, it has been shown that what is crucial is to govern the stability of spontaneous activities also at the level of excitatory connections from outside the module (see \cite{Am-Br}). When the average synaptic efficacy exceeds a critical value, the local network develops a variety of local attractors in the background of the stability of the uniform global ones. Furthermore, at a local level, neurons whose rates are not significantly enhanced usually maintain a spiking rate at levels similar to the global spontaneous activity (\cite{Am-Br97}).

The paper is structured as follows. In Section \ref{LTB}, we first present two theorems that state conditions for the non-explosiveness of the model (Subsection \ref{s2.1}) and then a theorem about the uniqueness of the invariant measure (Subsection \ref{s2.2}).

\section{Long-time  behaviour }\label{LTB}
We start with the presentation of the model (see for instance \cite{H-K-L}). Let   $(N^i(ds, dz))_{i\in \N}$ be a family of \textit{i.i.d.}\ Poisson random measures on $\R_+ \times \R_+ $ having intensity measure $ds dz.$ We study the Markov process $X_t = (X^{ i }_t )_{i\in \N}$
taking values in $\R_+^{\N}$  and solving, for $i\in \N$, for $t\geq 0$,
\begin{eqnarray}\label{eq:dyn}
X^{ i}_t &= & X^{i}_0 -a_i\int_0^{t}g(X^{i}_s)ds -   \int_0^\infty  \int_0^t
X^{ i}_{s-}\I  _{ \{ z \le  \phi_{i} ( X^{ i}_{s-}) \}} N^i (ds, dz) \\
&&+    \sum_{ j \neq i } W_{j  i} \int_0^\infty \int_0^t  \I_{ \{ z \le \phi_{i} ( X^{j}_{s-}) \}} N^j (ds, dz).
\nonumber
\end{eqnarray}  where the intensity  functions $\phi_{i}:\R_+\mapsto \R_+$  are Lipschitz functions and  $g:\R_+\rightarrow \R_+$ is an integrable function, while for each $j\neq i,\, W_{j  i} \in \R$ is the synaptic weight describing the influence of neuron $j$ on neuron $i$.

For any test function $ f : \R_+^N \to \R $  and $x \in \R_+^{\N}$, the  generator of the process $X$  is described by    
\begin{align}\label{gen1}
\Ge f (x ) =&-\sum_{i\in \N}a_ig(x^i)\frac{d}{dx^i}f(x) +\sum_{i\in \N }   \phi_i (x^i) \left[ f ( \Delta_i ( x)  ) - f(x) \right]  
\end{align}
for $a_i\geq 0$ for all $i\in \N$,
where
\begin{equation}\label{Delta1}
(\Delta_i (x))_j =   
\begin{cases}
\max\{x^{j} +W_{i  j},0\}  & j \neq i \\
0 & j = i 
\end{cases}
.\end{equation}
 We consider both positive and negative synaptic weights $W_{ij}$. In the case of  non negative weights $W_{ij}\geq0$ we will write  $w_{ij}:=W_{ij}$, while in the case of negative weights $W_{ij}<0$ we denote  $v_{ij}:=\vert W_{ij}\vert$. The  sets of positive and negative synaptic weights that a neuron $i$ sends to other  neurons    are   denoted as $P_i$ and $N_i$ respectively, that is
 \begin{equation*}\label{eq:delta3}
 W_{i  j}=    
\begin{cases}
-v_{ij}  & j \in N_i \\
w_{ij}  & j \in P_i   
\end{cases}
 ,
\end{equation*}
with  $v_{ij}, w_{ij}\geq 0$ and sets $N_i \cap P_i=\emptyset$ such that $N_i\cup P_i=\N\setminus \{i\}$ for every $i \in \N$.

\subsection{Non-explosiveness}\label{s2.1}
 In order to examine criteria for the non-explosiveness of the system, we consider  conditions about the intensity functions and the weights.

  For $N[s,t]$ the number of spikes occurring during the time interval $[s,t]$ in the system, we say that the system is non-explosive if $\E(N[s,t])< \infty $ (see \cite{D-L-O}).

 Concerning the intensity functions $\phi_i$ the main assumption follows:

 \begin{itemize}

\item   \underline{Condition (L):} Assume that for every $i\in \N$,  $\phi_{i}$ are Lipschitz continuous with Lipschitz
constant $\vert \vert \phi_i\vert \vert_{Lip}$ and that they satisfy:
\[    \sum_{i=1}^{\infty}\vert \vert \phi_i\vert \vert_{Lip}<\infty, \   \  \  \  \ \  \ \  \sum_{i=1}^{\infty}\phi_i(0)<\infty  \  \  \  \ \  \ \text{and}\  \  \  \ \ \sup_{ j \in \N}\sum_{i\in N_{j}}v_{ji}\vert \vert \phi_i\vert \vert_{Lip}<\infty.\]
\end{itemize}

Since  any neuron $i$ spikes with intensity $\phi_i$, the first two conditions above mean that not only the neurons do not spike with the same frequency, but that this diminishes sufficiently fast from one to the other, so that the two first sums will be finite. We will consider the following labeling of neurons. Considering the intensity functions, we label neurons in such an order that  $\vert \vert \phi_{i+1}\vert \vert_{Lip}\leq \vert \vert \phi_i\vert \vert_{Lip}$.  Then, the possibility of a neuron $i$ to spike decreases  as $i$ increases.    Concerning the third bound, for neurons with this labeling order,   if the negative weights   that a neuron $j$ sends when it spikes   do not increase in  order, i.e.  $v_{ji}\geq v_{ji+1}$, then when  $\max_jv_{j1}<\infty$ the third bound follows from the first. In the opposite case, where for any fixed $i$ the $v_{ij}$ increases with $j$, the third inequality can still hold true if the decrease  of $\vert \vert \phi_i\vert \vert_{Lip}$ is big enough to compensate for the increasing weights.  

   Concerning the weights,  we consider four  different sets of conditions:

 \begin{itemize}

\item   \underline{Condition (A):}  The negative weights dominate over the positive ones. For all $i$:
\[ \sum_{j\in N_i} v_{ij}   \geq  \sum_{j\in P_i} w_{ij}. \]
 
 \item    \underline{Condition (B):} The drift dominates over the weights: 

Assume $\phi_{i}\geq 0$ is Lipschitz  continuous, with  Lipschitz  constant  $\vert \vert \phi_i\vert \vert_{Lip}$. Assume $\phi_{i} \leq c_{i}g$ for some $c_{i}>0$ and a non-negative function $g$, and that the weights, for all $i$, satisfy
 \[ \sum_{j\in N_i}v_{ij} <  \sum_{j\in P_i}w_{ij}\text{ \ and \  \  \ }a_i\frac{ \vert \vert \phi_i\vert \vert_{Lip}}{c_i}\geq   \sum_{j\in P_i}w_{ij}\vert \vert \phi_j\vert \vert_{Lip}- \sum_{j\in N_i}v_{ij} \vert \vert \phi_j\vert \vert_{Lip}.\]

 \item   \underline{Condition (C):}   Controlled dominance  of positive weights over the negative ones:
 \[\text{  for all \ } i: \  \  \sum_{j\in N_i}v_{ij} <  \sum_{j\in P_i}w_{ij}\]
 and  \[\max_{i\in \N} \left( \sum_{j\in P_i}w_{ij}-\sum_{j\in N_i} v_{ij}   \right)<\infty.\]

 \item    \underline{Condition (D):}  Interactions of infinitely long distance, with the negative weights dominating over the positive:
  For all $i$ and for any $k \in \N$
\[ \sum_{j\in N_i\cap \{j :\vert j-i\vert\leq k \}}v_{ij}  \geq  \sum_{j\in P_i\cap \{j :\vert j-i\vert\leq k \}}w_{ij} .\]
 \end{itemize}
 Since the purpose of the paper is to present criteria for the existence of the G-L model for the case that the synaptic weights $W_{ij}$ do not satisfy the usual PUS condition (\ref{Dob}), before we present the statement and the proof of the first theorem, we will present  examples  that satisfy    conditions (A), (B), (C) or (D), but  go beyond the   (\ref{Dob}), which means that they satisfy  the following:
 \begin{align}\label{NDob}\sup_{i\in \N}\sum_{j\in \N} \vert W_{ji} \vert=\infty.\end{align}
 \begin{example}
\label{par1} The first two examples relate to Condition (A). Notice that in any of these two first examples, since  one can consider any  $a_i\geq 0$, we can also set $a_i=0$ and get rid of the drift.
 
 \underline{Example 1:} We present an example of synaptic weights $W_{ij}$ that satisfy condition (A) but not (\ref{Dob}). Choose  the synaptic  weights as follows: For every $i\in \N$,
 \begin{equation*}W_{ij}=     \begin{cases}
-i  & j= i+1 \\
i  & j= i+2    \\
0  & \text{otherwise}   \\
\end{cases}  .
\end{equation*} Then, it is easy to see that 
\[ \sum_{j\in N_i}v_{ij}=   \sum_{j\in P_i}w_{ij}=i\]
while for every $i\geq 3$ we have that 
\[\sum_{j\in \N} \vert W_{ji} \vert=v_{i-1,i}+w_{i-2,i}=(i-1)+(i-2)=2i-3\rightarrow \infty \text{\  as  \ } i\rightarrow \infty,\]
which shows (\ref{NDob}).

   \underline{Example 2:} Another paradigm   that satisfies condition (A), but this time with an increasing length of interactions. For every   $i\in \N$,  define the weights 
\[W_{ij}=    \begin{cases}{cc}
-i & j= i+1 \\
1 & j=i+2,...,2i   \\
0  & \text{otherwise}   \\
\end{cases} .\] 
Then,   we compute
\[ \sum_{j\in P_i} w_{ij} =i-1<i=\sum_{j\in N_i} v_{ij}\] which is Condition (A). Furthermore,    for every $i\geq 3$ we have that 
\[\sum_{j\in \N} \vert W_{ji} \vert=\sum_{j<i:w_{ji}\neq 0}w_{ji}+v_{i-1,i}\geq v_{i-1,i}=i-1\rightarrow \infty \text{\  as  \ } i\rightarrow \infty,\]
which shows (\ref{NDob}).

 \underline{Example 3:} In the  third example, we present  a paradigm   that satisfies condition (B). For any $p_i>0$ for $i\in\N$, consider $\phi_i(x)=p_ix,$ $g(x)=x$ and the drift $a_i=(i+1)p_{i+2}+ip_{i+1}, \forall i\in \N$. Then choose the synaptic  weights to be:
\begin{equation*}W_{ij}=    \begin{cases}
-i  & j= i+1 \\
i +1 & j= i+2    \\
0  & \text{otherwise}   \\
\end{cases}   .
\end{equation*}
From the choice of $\phi_i$ and $g$ we have that $\vert\vert\phi_i\vert\vert_{Lip}=p_i$ and $c_i=p_i$.  Then,  for every $i\in \N$ 
\[ \sum_{j\in P_i} w _{ij}=(i+1) >i =\sum_{j\in N_i} v_{ij}   \text{ \  and \ } \sum_{j\in P_i} w_{ij} -\sum_{j\in N_i} v_{ij} =1<2= a_i\] \[a_i\frac{ \vert \vert \phi_i\vert \vert_{Lip}}{c_i}= (i+1)c_{i+2}+ic_{i+1}\geq(i+1)c_{i+2}+ic_{i+1}=   \sum_{j\in P_i}w_{ij}\vert \vert \phi_j\vert \vert_{Lip}-\sum_{j\in N_i}v_{ij} \vert \vert \phi_j\vert \vert_{Lip}\]which is condition (B). At the same time,    for every $i\geq 3$ we have that 
\[\sum_{j\in \N} \vert W_{ji} \vert=v_{i-1,i}+w_{i-2,i}=(i-1)+(i-1)=2i-2\rightarrow \infty \text{\  as  \ } i\rightarrow \infty,\]
which shows (\ref{NDob}).

 \underline{Example 4:}  A paradigm for (C). Choose the synaptic weights 
\begin{equation*}w_{ij}=    \begin{cases}
i+1  & j=i-1 \\
0 & \text{otherwise}  \\
\end{cases}   \text{  \  and  \  }  v_{ij}=     \begin{cases}
i  & j=i+1  \\
0   & \text{otherwise}   \\
\end{cases}  .
\end{equation*}
Then  \[ 0<\sum_{j\in P_i} w_{ij} -\sum_{j\in N_i} v_{ij} = 1 , \] and so,    \[\max_{i\in \N} \left( \sum_{j\in P_i}w_{ij}-\sum_{j\in N_i} v_{ij}   \right)=1<\infty,\]
 while \[  \sum_{j\in \N} \vert W_{ji}\vert =  w_{i+1i}+  v_{i-1i}=(i+1+1)+(i-1)=2i+1\rightarrow \infty\]
 as $i\rightarrow \infty$, which violates the PUS.
 
 \underline{Example 5:} Here we  present  a paradigm   that satisfies Condition (D). As in the first two examples, the drift can also be set to zero. For every $i \in \N$, define the synaptic weights as follows:
\begin{equation*}W_{ij}=    \begin{cases}
-2 & j: dist(j,i)=1 \\
-1  &  j: dist(j,i)=2k+1,\forall k\in\N    \\
1  &  j: dist(j,i)=2k,\forall k\in\N   \\
\end{cases}  .
\end{equation*}
Then,  for all $i\in \N$ and any $k$ odd,  we compute
\[ \sum_{j\in N_i\cap \{j :\vert j-i\vert\leq k \}}v_{ij} -\sum_{j\in P_i\cap \{j :\vert j-i\vert\leq k \}}w_{ij}=2>0,    \] while for every $k$ even 
\[ \sum_{j\in N_i\cap \{j :\vert j-i\vert\leq k \}}v_{ij} -\sum_{j\in P_i\cap \{j :\vert j-i\vert\leq k \}}w_{ij}=1>0,    \]
which is condition (D). Furthermore,   every neuron receives a weight from every other neuron, which has absolute value bigger or equal to one, and so  for every neuron $i$ we have
\[\sum_{j\in \N} \vert W_{ji} \vert= \infty,\]
which implies (\ref{NDob}).
\end{example}
  We now present the first result of the paper, a criterion for non explosion. What we will show is that during a finite time, only finite spikes can take place from the infinitely many neurons in the system.
 As a result, we can show the non explosiveness of the system, whenever it starts from any non-exploded configuration, i.e. any initial configuration $x_0$  such that $sup_{i\in \N}x^i_0<\infty,$ since the value of the  function 
 \begin{align}\label{LyapF}h(x_0)=\sum_{i=1}^{\infty}\vert \vert  \phi_i \vert\vert_{Lip}x_{0}^i\leq sup_{i\in \N}x^i_0\sum_{i=1}^{\infty}\vert \vert  \phi_i \vert\vert_{Lip}<\infty,\end{align}
at any non-exploded configuration is finite, because of Condition (L). 
\begin{theorem}\label{theorem1}Assume condition (L).  If any of the four  conditions (A), (B), (C) or (D) holds, then  the process is non-explosive.
\end{theorem}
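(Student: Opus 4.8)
\textit{Proof strategy.} The plan is to verify a Foster--Lyapunov drift inequality for the function $h(x)=\sum_{i=1}^{\infty}\|\phi_i\|_{Lip}x^i$ singled out in \eqref{LyapF}, and then to transfer the resulting moment bound from finite truncations to the full infinite network. First I would compute $\Ge h$ from \eqref{gen1}. Since $\partial_{x^i}h=\|\phi_i\|_{Lip}$, the transport part contributes $-\sum_i a_i\|\phi_i\|_{Lip}g(x^i)\le 0$, while from \eqref{Delta1} a spike of neuron $i$ changes $h$ by
\[
h(\Delta_i(x))-h(x)=\sum_{j\in P_i}\|\phi_j\|_{Lip}w_{ij}-\sum_{j\in N_i}\|\phi_j\|_{Lip}\min\{v_{ij},x^j\}-\|\phi_i\|_{Lip}x^i,
\]
each increment being weighted by the spiking rate $\phi_i(x^i)$, so that $\Ge h(x)=-\sum_i a_i\|\phi_i\|_{Lip}g(x^i)+\sum_i\phi_i(x^i)\big[h(\Delta_i(x))-h(x)\big]$. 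The whole difficulty is to control the excitatory increase $\sum_i\phi_i(x^i)\sum_{j\in P_i}\|\phi_j\|_{Lip}w_{ij}$.

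The key step is to establish, under each of (A)--(D), an inequality $\Ge h(x)\le C_1+C_2\,h(x)$ with $C_1,C_2<\infty$. I would bound $\phi_i(x^i)\le\phi_i(0)+\|\phi_i\|_{Lip}x^i$, so that the contribution of the boundary values $\phi_i(0)$ is a constant rendered finite precisely by Condition (L). The excitatory increase is then controlled differently in each regime. Under (B) the transport term is the compensator: $\phi_i\le c_i g$ gives $-a_i\|\phi_i\|_{Lip}g(x^i)\le-\tfrac{a_i\|\phi_i\|_{Lip}}{c_i}\phi_i(x^i)$, and the second inequality of (B) is exactly $\tfrac{a_i\|\phi_i\|_{Lip}}{c_i}\ge\sum_{j\in P_i}w_{ij}\|\phi_j\|_{Lip}-\sum_{j\in N_i}v_{ij}\|\phi_j\|_{Lip}$, so after cancellation only the clamping defect $\sum_i\phi_i(x^i)\sum_{j\in N_i}\|\phi_j\|_{Lip}(v_{ij}-x^j)^+$ survives, again constant-summable by (L). Under (C) the uniform bound on the net excitation $\sum_{j\in P_i}w_{ij}-\sum_{j\in N_i}v_{ij}$ is what yields the genuinely linear term $C_2 h(x)$. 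Under (A) and (D) there is no drift to lean on, so the inhibitory jump $-\sum_{j\in N_i}\|\phi_j\|_{Lip}\min\{v_{ij},x^j\}$ together with the reset $-\|\phi_i\|_{Lip}x^i$ must dominate the excitation; here I would write $\min\{v_{ij},x^j\}=v_{ij}-(v_{ij}-x^j)^+$, use Condition (A) (dominance of inhibition) to cancel the excitatory weights against the inhibitory ones, let the reset supply the negative drift in $h$, and absorb the residual clamping defect $(v_{ij}-x^j)^+$ into $C_1$ via (L). For (D) the domination is available only on each finite window $\{|j-i|\le k\}$, so I would organise the interaction into successive distance-shells and apply the windowed inequality shell by shell, handling the infinite range by a telescoping/limit argument.

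Once the drift inequality is in hand I would make the argument rigorous on finite truncations. For each $n$ let $X^{(n)}$ be the subsystem carried by $(N^i)_{i\le n}$ with interactions restricted to $\{1,\dots,n\}$; this is a piecewise-deterministic Markov process with locally finite jump rate, so Dynkin's formula applies up to the time $\tau_k$ of the $k$-th spike. The drift inequality, which holds uniformly in $n$, together with Gronwall's lemma gives $\E\,h(X^{(n)}_{t\wedge\tau_k})\le (h(x_0)+C_1 t)\,e^{C_2 t}$ uniformly in $k$ and $n$; letting $k\to\infty$ yields both non-explosion of each truncation and the bound $\E\,h(X^{(n)}_t)\le (h(x_0)+C_1 t)\,e^{C_2 t}$, finite by \eqref{LyapF}. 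Passing to the limit $n\to\infty$ by Fatou's lemma gives $\E\,h(X_t)<\infty$, hence $\int_0^T\E\,h(X_s)\,ds<\infty$ for every $T$. Finally, since the total spiking rate obeys $\sum_i\phi_i(x^i)\le\sum_i\phi_i(0)+h(x)$, this moment bound forces the expected number of spikes on $[0,T]$ to be finite, so only finitely many jumps occur in finite time and the process is non-explosive.

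The step I expect to be the main obstacle is the drift inequality for (A) and (D). The reset-to-zero clamping in \eqref{Delta1} replaces the full inhibitory weight $v_{ij}$ by $\min\{v_{ij},x^j\}$, so the cancellation against the excitatory increase is only partial, and one must check that the leftover clamping defect together with the Lipschitz weighting $\|\phi_j\|_{Lip}$ is genuinely absorbed into the constant $C_1$ by Condition (L); for (D) one must further confirm that merely windowed domination still suffices once the interactions have infinite range. By comparison, the truncation-and-limit passage and the concluding control of the spike count by $h$ are routine.
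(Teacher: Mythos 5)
Your overall strategy coincides with the paper's: the same Lyapunov function $h(x)=\sum_i\Vert\phi_i\Vert_{Lip}x^i$, a Foster--Lyapunov bound $\Ge h\le C_1+C_2h(x)$ verified separately under (A)--(D) (with (B) handled by $\phi_i\le c_ig$ and the drift, (C) by the uniform bound on the net excitation, (D) by windowed truncation and Fatou), then Dynkin's formula, Gronwall, and the final control of the expected spike count by $\sum_i\phi_i(0)+\int\E h(X_s)\,ds$. Running the truncation/Fatou passage for all four cases rather than only for (D) is a harmless (indeed more careful) variation.

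The one step that does not close as you describe it is the absorption of the ``clamping defect'' into the constant $C_1$. You correctly note that, because of the $\max\{\cdot,0\}$ in \eqref{Delta1}, a spike of $i$ changes $h$ by
\[
\sum_{j\in P_i}\Vert\phi_j\Vert_{Lip}w_{ij}-\sum_{j\in N_i}\Vert\phi_j\Vert_{Lip}v_{ij}+\sum_{j\in N_i}\Vert\phi_j\Vert_{Lip}(v_{ij}-x^j)^{+}-\Vert\phi_i\Vert_{Lip}x^i,
\]
so after the cancellations of (A), (B) or (D) the nonnegative term $\sum_i\phi_i(x^i)\sum_{j\in N_i}\Vert\phi_j\Vert_{Lip}(v_{ij}-x^j)^{+}$ survives. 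The only pointwise bound available is $(v_{ij}-x^j)^{+}\le v_{ij}$, and with $\phi_i(x^i)\le\phi_i(0)+\Vert\phi_i\Vert_{Lip}x^i$ you would need $\sup_i\sum_{j\in N_i}\Vert\phi_j\Vert_{Lip}v_{ij}<\infty$ to produce the $C_2h(x)$ part and $\sum_i\phi_i(0)\sum_{j\in N_i}\Vert\phi_j\Vert_{Lip}v_{ij}<\infty$ to produce $C_1$. Neither follows from (A) or (D) together with (L): in the paper's own Example 1 one has $v_{i,i+1}=i$, so $\sum_{j\in N_i}\Vert\phi_j\Vert_{Lip}v_{ij}=i\,\Vert\phi_{i+1}\Vert_{Lip}$, which $\sum_i\Vert\phi_i\Vert_{Lip}<\infty$ does not keep bounded. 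So ``constant-summable by (L)'' is not justified, and this is exactly the step you yourself flagged as the main obstacle. For comparison, the paper's proof does not resolve this either: it evaluates $h(\Delta_i(x))$ with $x^j-v_{ij}$ in place of $\max\{x^j-v_{ij},0\}$, i.e.\ it silently drops the defect, which under-estimates $h(\Delta_i(x))$ and therefore only bounds $\Ge h$ from below; you have identified a real subtlety rather than invented one, but your proposed repair does not work under the stated hypotheses.
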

\begin{proof}
Denote $b_{i}= (\vert \vert \phi_i\vert \vert_{Lip})^{-1}$ the inverse of the  Lipschitz constant of the intensity function $\phi_i$. Define $h(x)=\sum_{i=1} ^\infty \frac{x^i}{b_{i}} $ for $x^i\geq 0$. Then
\begin{align}\nonumber
\Ge h(x)=&-\sum_{i=1} ^\infty a_ig(x^i)\frac{d}{dx^i}h  +\\ \nonumber&+ \sum_{ i =1 }^{\infty}   \phi_{i} (x^i) \left[ \sum_{j\in N_i}\frac{\max\{ x^j-v_{ij},0\}}{b_{j}}+ \sum_{j\in P_i}\frac{(x^j+w_{ij}) }{b_{j}}+ \sum_{j\in( P_i\cup N_{i})^{c}}\frac{x^j  }{b_{j}}- \sum_{j=1}^{\infty}\frac{x^j}{b_{j}}\right] \\ \nonumber =&-\sum_{i=1} ^\infty a_ig(x^i)\frac{d}{dx^i}h  + \sum_{ i =1 }^{\infty}   \phi_{i} (x^i) \left[ -\sum_{j\in N_i:x^j>v_{ij}}\frac{   v_{ij} }{b_{j}} +\sum_{j\in P_i}\frac{ w_{ij} }{b_{j}} - \frac{x^i  }{b_{i}}-  \sum_{j\in N_{i}}\frac{x^j}{b_{j}}\mathcal{I}_{\{x^j\leq v_{ij}\}}\right]\\  \leq\ \nonumber & 
 -\sum_{ i =1 }^{\infty}  \frac{a_i }{b_{i}}g(x^i) +\sum_{i=1} ^\infty \phi_{i} (x^i) \left[ -\frac{ x^i}{b_{i}} -\sum_{j\in N_i} \frac{v_{ij}}{b_{j}}  + \sum_{j\in P_i} \frac{w_{ij}}{b_{j}}  \right]+\\ \nonumber  &+\sum_{i=1} ^\infty \phi_{i} (x^i) \left[   \sum_{j\in N_{i}}\frac{(v_{ij}-x^j)}{b_{j}}\mathcal{I}_{\{x^j\leq v_{ij}\}}  \right].\\ \nonumber \end{align}
If we use (L)
we get\begin{align}\label{t2.1_1}\nonumber
\Ge h(x)\leq&
-\sum_{ i =1 }^{\infty}  \frac{a_i }{b_{i}}g(x^i) +\sum_{i=1} ^\infty \phi_{i} (x^i) \left[ -\frac{ x^i}{b_{i}} -\sum_{j\in N_i} \frac{v_{ij}}{b_{j}}  + \sum_{j\in P_i} \frac{w_{ij}}{b_{j}}  \right]+\\  \nonumber  &+\max_{i}\left[   \sum_{j\in N_{i}}\frac{v_{ij} }{b_{j}}   \right] \sum_{i=1} ^\infty \phi_{i} (x^i) \\ \nonumber  \leqslant&
-\sum_{ i =1 }^{\infty}  \frac{a_i }{b_{i}}g(x^i) +\sum_{i=1} ^\infty \phi_{i} (x^i) \left[ -\frac{ x^i}{b_{i}} -\sum_{j\in N_i} \frac{v_{ij}}{b_{j}}  + \sum_{j\in P_i} \frac{w_{ij}}{b_{j}}  \right]+\\ &+\max_i\left[   \sum_{j\in N_{i}}\frac{v_{ij} }{b_{j}}   \right]h(x), 
\end{align}
where in the last inequality we used that $\phi_i$ is Lipschitz continuous and that $ \sum_{i=1} ^\infty \phi_{i} (x^i)\leq \sum_{i=1} ^\infty \vert \vert \phi_i\vert \vert_{Lip}x^i=h(x)$. 

  Under   hypothesis  (A), since $\phi_i, g\geq 0$, (\ref{t2.1_1})  becomes
\[\Ge h(x)\leq -\sum_{ i =1 }^{\infty}  \frac{a_i }{b_{i}}g(x^i) -\sum_{i=1} ^\infty \phi_{i} (x^i)\frac{ x^i}{b_{i}}   +\max_i\left[   \sum_{j\in N_{i}}\frac{v_{ij} }{b_{j}}   \right]h(x)\leq B_{1}h(x),\]
where above we denoted $B_1:= \max_i\left[   \sum_{j\in N_{i}}\frac{v_{ij} }{b_{j}}   \right].$

 Under   hypothesis  (B),   (\ref{t2.1_1}) becomes
\begin{align*}\Ge h(x)\leq &-\sum_{ i =1 }^{\infty}  \left(\frac{a_i }{b_{i}}g(x^i) +\phi_{i}(x^{i})\left[ \sum_{j\in N_i} \frac{v_{ij}}{b_{j}}  - \sum_{j\in P_i} \frac{w_{ij}}{b_{j}} \right] \right)-\sum_{i=1} ^\infty \phi_{i} (x^i)\frac{  x^i}{b_{j}} +\max_i\left[   \sum_{j\in N_{i}}\frac{v_{ij} }{b_{j}}   \right]h(x)  \\ \leq &-\sum_{ i =1 }^{\infty} g(x^i) \left(\frac{a_i }{b_{i}}+c_{i} \left[ \sum_{j\in N_i}  \frac{v_{ij}}{b_{j}}- \sum_{j\in P_i} \frac{w_{ij}}{b_{j}} \right] \right)-\sum_{i=1} ^\infty \phi_{i} (x^i)\frac{  x^i}{b_{i}}+\max_i\left[   \sum_{j\in N_{i}}\frac{v_{ij} }{b_{j}}   \right]h(x)  \\ \leq &B_{1}h(x).\end{align*}
where above at first we used that $\phi_{i}\leq c_{i}g$ and then the dominance of the weights by the drift. 

We now examine a bound under condition (C). At first we bound (\ref{t2.1_1}) by
\[
\Ge h(x)\leq   \sum_{i=1} ^\infty \phi_{i} (x^i) \left[ -\sum_{j\in N_i} \frac{v_{ij}  }{b_{j}}+ \sum_{j\in P_i} \frac{w_{ij}}{b_{j}}  \right]+\max_i\left[   \sum_{j\in N_{i}}\frac{v_{ij} }{b_{j}}   \right]h(x). \]
 Since $\phi_{i}$ is Lipschitz continuous  we can further bound   
\begin{align*}\Ge h(x)\leq & \sup_{i\in \N} \left( \sum_{j\in P_i} \frac{w_{ij} }{b_{j}}-\sum_{j\in N_i} \frac{v_{ij}}{b_{j}} \right)\sum_{i=1} ^\infty \vert\vert \phi_{i} \vert \vert _{Lip}x^i +  \sup_{i\in \N} \left( \sum_{j\in P_i} \frac{w_{ij} }{b_{j}}-\sum_{j\in N_i} \frac{v_{ij}}{b_{j}} \right)\sum_{i=1}^{\infty}\phi_{i}(0)+\\   &+\max_i\left[   \sum_{j\in N_{i}}\frac{v_{ij} }{b_{j}}   \right]h(x),\end{align*}
which leads to the following    condition
             \[\Ge h(x)\leq B_{2}h(x)+D_{2} \]
since $h(x)=\sum_{i=1} ^\infty\frac{x^i}{b_i}=\sum_{i=1} ^\infty \vert\vert \phi_{i} \vert \vert _{Lip}x^i$ and the two   constants below \[B_{2}= B_1+ \sup_{i\geq 1}\left( \sum_{j\in P_i}w_{ij}-\sum_{j\in N_i} v_{ij}   \right)  \ \text{and}  \   D_{2}= \sup_{i\in \N} \left( \sum_{j\in P_i} \frac{w_{ij} }{b_{j}}-\sum_{j\in N_i} \frac{v_{ij}}{b_{j}} \right)\sum_{i=1}^{\infty}\phi_{i}(0).\]
are finite because of condition (C) and (L). 

If we put together, (A), (B) and (C) we get 
             \[\Ge h(x)\leq B_{2}h(x)+D_{2.} \]
If we use  Dynkin's formula (see \cite{Dyn}, \cite{Oks}) :
\[\E[h(X_t)]=h(x_0)+\int_0^t\E[\Ge h(X_s)]ds,\]
we then get
\begin{align*}\E[h(X_t)]=h(x_0)+\int_0^t\E[\Ge h(X_s)]ds\leq     h(x_0)   + \int_0^t  D_2 + B_{2}\E[h(X_s)]ds.\end{align*}
We now consider  the linear generalisation of Gronwall's inequality (see \cite{Fil}): For continuous $u(t)>0$ and  $a,b\geq 0, c>0$
\begin{align*}u(t)\leq \alpha+\int_{t_0}^t b+c u(s)ds \Rightarrow    u(t)\leq \frac{b}{c}(e^{c(t-t_0)}-1)+ a e^{c(t-t_0)} , \ \ t\geq t_0.\end{align*}
Using the last one  we get
\[\E[h(X_t)]\leq h(x_0) e^{B_{2}t}+\frac{D_{2}}{B_{2}}(e^{B_2 t}-1).\]

For the case (D), one can consider the operator $\Ge_k$ to be the same as $\Ge$ but with the weights $X_{ij}=0$ if $\vert i-j \vert >k$. Then,  one works as before to obtain
 \[\E[h(\hat X^k_t)]\leq h(\hat X^k_0) e^{B_{2}t}+\frac{D_{2}}{B_{2}}(e^{B_2t}-1)\]
  where $\hat X^k$ is the process with generator $\Ge_k$. Then, we can use Fatou's lemma to extend the inequality to the infinite dimensional process
\[\E[h(X_t)]\leq \liminf_k\E[h(\hat X^k_t)]\leq h(x_0) e^{B_{2}t}+\frac{D_{2}}{B_{2}}(e^{B_2 t}-1).\]
Putting all the bounds from the  four cases together
\begin{align}\label{Gron}\E[h(X_t)]\leq h(x_0)+(h(x_0)  +\frac{D_{2}}{B_{2}})e^{B_2 t}.\end{align}
We will now show the non-explosiveness of the system. For $N[s,t]$ the number of   spikes occurring  during the time interval $[s,t]$ in the system, we compute
\begin{align*}\E(N[s,t])=&\E\int_s^t\sum_{i=1}^{\infty}\phi_i(x^i_u)du\leq \\  \leq &\E\int_s^t  \sum_{i=1}^{\infty}\vert\vert \phi_i \vert \vert _{Lip} x^i_u du+(t-s)\sum_{i=1}^{\infty} \phi_{i} (0)\\ =&\int_s^t\E[h(x_u)]du+(t-s)\sum_{i=1}^{\infty} \phi_{i} (0)  \end{align*}
where the second term on the right-hand side is bounded by hypothesis (L).
If we use (\ref{Gron})
\begin{align*}\E(N[s,t])\leq & \frac{(h(x_0)  +\frac{D_{2}}{B_{2}})}{B_{2}}(e^{B_2t}-e^{B_2 s})+( h(x_0)+\sum_{i=1}^{\infty} \phi_{i} (0))(t -s )<\infty,\end{align*}
which proves the theorem.
\end{proof}
In   Theorem \ref{theorem1}, we show the non-explosiveness of the system under strong conditions on the intensity functions, so that (\ref{LyapF}) will hold for any non-exploded configuration. Next, we will show that, under weaker conditions,     weaker results hold. We will show  that during a finite time, any single neuron can spike only a finite number of times. As a result, if any neuron is interacting only with a finite number of other neurons through synaptic connections,   then any single neuron can jump only finitely many times on a finite time interval.
  
Furthermore, under some additional conditions on the reset value of the intensity function  $\phi_i(0)$,
we show, that if the system starts from a configuration $x_0$ such that $\sum_{i=1}^{\infty}x_{0}^i<\infty,$
then on a finite time interval only finitely many neurons in the system will spike.  One should notice, that while under the strong condition (L) we could consider any non-exploded initial configuration,    now that (L) will not be assumed, we restrict to initial configurations $x_0$ such that $\sum_{i=1}^{\infty}x_{0}^i<\infty$.

We consider the following alternative to Condition (B).
 
\begin{itemize}
 
 \item    \underline{Condition (E):} The drift dominates over the weights: 
Assume $\phi_{i}$ is Lipschitz continuous and $\phi_{i} \leq c_{i}g$ for some $c_{i}>0,$ and that the weights satisfy
 \[ \sum_{j\in N_i}v_{ij} <  \sum_{j\in P_i}w_{ij}\text{ \ and \  \  \ }a_i\geq c_{i}   \sum_{j\in P_i}w_{ij} -c_{i} \sum_{j\in N_i}v_{ij}  .\] 
\end{itemize}
\begin{theorem}\label{theorem1+} Assume $\phi_{i}$ are Lipschitz continuous with   Lipschitz constants uniformly bounded and that   one of the four  conditions (A),   (C), (D) or (E) holds.
 Then the following statements are true:

(i) Any single neuron spikes at most finitely many times in a time interval of finite length. 

(ii) If  in addition
\[\sum_{i=1}^{\infty}\phi_i(0)<\infty,\]
then, in a time interval of finite length only finitely many  spikes take place in the system, assuming that the system's initial configuration $x_0$ is such  that $\sum_{i=1}^{\infty}x_{0}^i<\infty$.
\end{theorem}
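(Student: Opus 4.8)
The strategy is to reuse the Lyapunov-function machinery from Theorem \ref{theorem1}, but now applied to a single neuron rather than the whole system, and to do so with a weaker (only uniformly bounded, not summable) control on the Lipschitz constants. For part (i), I would fix a neuron $i$ and count its spikes via its own intensity integral: if $N_i[s,t]$ denotes the number of spikes of neuron $i$ on $[s,t]$, then $\E(N_i[s,t]) = \E\int_s^t \phi_i(X^i_u)\,du \leq \Vert\phi_i\Vert_{Lip}\E\int_s^t X^i_u\,du + (t-s)\phi_i(0)$, so it suffices to obtain a uniform-in-time bound on $\E[X^i_u]$ for each fixed $i$. The natural way to get this is a Foster--Lyapunov estimate for a \emph{finite-weight} Lyapunov function, exactly as in the proof of Theorem \ref{theorem1}; under conditions (A) or (D) the drift term $\Ge h(x)\leq 0$ and $\E[h(X_t)]\leq h(x_0)$, which already bounds each coordinate. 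Under (C) one gets the affine bound $\Ge h\leq Bh+D$ and Gronwall gives the growth estimate already displayed in \reff{Gron}. The new ingredient is condition (E), which replaces (B): since (E) demands $a_i \geq c_i\bigl(\sum_{j\in P_i}w_{ij}-\sum_{j\in N_i}v_{ij}\bigr)$, the same computation as in case (B) of Theorem \ref{theorem1}, using $\phi_i\le c_i g$ and the sign of the bracketed term, yields $\Ge h(x)\leq 0$ again. In each case the finiteness of $\E\int_s^t X^i_u\,du$ follows, and with $\Vert\phi_i\Vert_{Lip}$ finite the right-hand side is finite, proving (i).

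For part (ii) I would count all spikes in the system simultaneously, writing
\[
\E(N[s,t]) = \E\int_s^t \sum_{i=1}^\infty \phi_i(X^i_u)\,du
\leq \E\int_s^t \sum_{i=1}^\infty \Vert\phi_i\Vert_{Lip} X^i_u\,du + (t-s)\sum_{i=1}^\infty \phi_i(0),
\]
where the second term is finite by the added hypothesis $\sum_i \phi_i(0)<\infty$. Since the Lipschitz constants are only uniformly bounded, say by $M=\sup_i\Vert\phi_i\Vert_{Lip}$, the first sum is controlled by $M\,\E\int_s^t \sum_{i=1}^\infty X^i_u\,du$, so everything reduces to bounding $\E\bigl[\sum_i X^i_u\bigr]$ uniformly in time. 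Here I would run the Foster--Lyapunov argument again but with the \emph{unweighted} function $H(x)=\sum_{i=1}^\infty x^i$ (rather than the $\Vert\phi_i\Vert_{Lip}$-weighted $h$), since it is now $H$ that enters the spike count. Computing $\Ge H$ gives, per neuron $i$, a loss term $-\phi_i(x^i)x^i$ and an interaction gain $\phi_i(x^i)\bigl(\sum_{j\in P_i}w_{ij}-\sum_{j\in N_i}v_{ij}\bigr)$, and the assumption $\sum_i x_0^i<\infty$ guarantees $H(x_0)<\infty$.

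**The main obstacle.** The delicate point is that the conditions (A)--(E) were designed to control the \emph{column} sums $\sum_{j}W_{ij}$ (the weights a neuron \emph{sends}), whereas $\Ge H$ naturally reorganises into the \emph{row} sums $\sum_{j}W_{ji}$ (the weights a neuron \emph{receives}), and it is precisely these received sums that blow up under \reff{NDob}. Under the weighted function $h$ this reshuffling was absorbed because the factor $b_j^{-1}=\Vert\phi_j\Vert_{Lip}$ pairs each received weight with the summable Lipschitz data; with the unweighted $H$ that cancellation is no longer automatic. I therefore expect the crux of (ii) to be verifying that $\Ge H(x)\leq B' H(x)+D'$ with finite constants under the stated hypotheses — most cleanly under (A) and (D), where the sign conditions force each bracket nonpositive so the interaction contribution drops out neuron-by-neuron before any interchange of summation is needed, and under (C)/(E) where the uniform bound on the bracket plus $\sum_i\phi_i(0)<\infty$ supplies the additive constant. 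Once such an estimate holds, Gronwall yields a uniform-in-time bound on $\E[H(X_t)]$, and substituting back gives $\E(N[s,t])<\infty$, completing the proof; as a technical safeguard I would, under (D), again pass through the truncated generators $\Ge_k$ and invoke Fatou's lemma to justify the infinite-dimensional estimate.
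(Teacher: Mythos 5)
Your part (ii) follows essentially the paper's own route: the paper takes the unweighted Lyapunov function $U(x)=\sum_{i\in K}x^i$ with $K=\N$, shows $\Ge U\le 0$ under (A), (D), (E) and $\Ge U\le BU+D$ under (C), and then bounds $\E(N[s,t])$ by $\sup_{i}\vert\vert\phi_i\vert\vert_{Lip}\int_s^t\E[U(x_u)]\,du+(t-s)\sum_i\phi_i(0)$, exactly as you propose. The ``main obstacle'' you describe is in fact not an obstacle for this part: expanding $\Ge U$ for $K=\N$ and grouping the double sum by the \emph{spiking} neuron yields directly $\sum_i\phi_i(x^i)\bigl[-x^i-\sum_{j\in N_i}v_{ij}+\sum_{j\in P_i}w_{ij}\bigr]$, i.e.\ the sent weights paired with $\phi_i(x^i)$, which is precisely what conditions (A)--(E) control; no interchange of summation and no $\vert\vert\phi_j\vert\vert_{Lip}$-weighting is required. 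Your handling of (E) (repeat the computation done for (B), now concluding $\Ge U\le 0$ from $a_i\ge c_i(\sum_{j\in P_i}w_{ij}-\sum_{j\in N_i}v_{ij})$) also matches the paper.

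The genuine gap is in part (i). Your reduction of (i) to a bound on $\E\int_s^t X^i_u\,du$ is correct, but the tool you invoke --- the weighted function $h(x)=\sum_i\vert\vert\phi_i\vert\vert_{Lip}\,x^i$ from Theorem \ref{theorem1} --- is not available here: Theorem \ref{theorem1+} only assumes the Lipschitz constants are uniformly bounded, not summable (condition (L) is dropped), and statement (i) imposes no summability on the initial configuration, so $h(x_0)$ may well be infinite and the Dynkin bound $\E[h(X_t)]\le h(x_0)$ is vacuous. The paper instead runs the same Lyapunov computation with $K=\{i\}$, i.e.\ $U(x)=x^i$, and bounds $\E[U(X_t)]$ for that single coordinate before estimating $\E(N_i[s,t])\le\vert\vert\phi_i\vert\vert_{Lip}\int_s^t\E[U(x_u)]\,du+(t-s)\phi_i(0)$. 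Note that the row-versus-column concern you raise is exactly where this single-coordinate functional is delicate: $\Ge(x^i)$ contains the term $\sum_{k\neq i}\phi_k(x^k)\bigl(\max\{x^i+W_{ki},0\}-x^i\bigr)$, i.e.\ the weights $i$ \emph{receives}, scaled by the other neurons' intensities, which (A)--(E) do not directly control. So your worry is well placed but aimed at the wrong half of the theorem; as written, your argument for (i) does not close, and you would need either to restore a summability hypothesis or to work with the single-neuron functional and confront the received-weight terms.
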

\begin{proof}
Define $U(x)=\sum _{i\in K}x^i,$ for $x^{i}\geq 0$, where the set $K$ will be either  $K=\{i\}$ for some $i\in \N$, or $K=\N$, referring to the statement (i) and the statement (ii)  of the theorem respectively. We have
\begin{align}\label{t2.1_1a}
\Ge U(x)=  
-\sum_{ i\in K }a_i g(x^i) +\sum_{i \in K} \phi_{i} (x^i) \left[ - x^i -\sum_{j\in N_i} v_{ij}  + \sum_{j\in P_i} w_{ij}  \right]. 
\end{align}
Under   Hypothesis  (A), the last becomes
\[\Ge U(x)\leq -\sum_{ i\in K}  a_i g(x^i) -\sum_{i\in K} \phi_{i} (x^i)x^i   \leq 0, \]
while    under hypothesis  (E),   (\ref{t2.1_1a}) becomes
\begin{align*}\Ge U(x)\leq  -\sum_{ i \in K} g(x^i) \left(a_i  +c_{i} \left[ \sum_{j\in N_i} v_{ij}  - \sum_{j\in P_i} w_{ij} \right] \right)-\sum_{i=1} ^\infty \phi_{i} (x^i)  x^i   \leq 0.\end{align*}
Then if we work as in the proof of Theorem \ref{theorem1} for (A) and (D), we  obtain
\[\E[U(X_t)]\leq U(x_0).\] 

Under condition (C), we bound (\ref{t2.1_1a}) by
\[
\Ge U(x)\leq   \sum_{i=1} ^\infty \phi_{i} (x^i) \left[ -\sum_{j\in N_i} v_{ij}  + \sum_{j\in P_i} w_{ij}  \right], \]
 and using that   $\phi$ is Lipschitz   leads to the following  condition
\[\Ge U(x)\leq BU(x)+D \]
for the two constants $B=\sup_{i\in K}\left\{\vert\vert \phi_{i} \vert \vert _{Lip}\right\}\sup_{i\in K} \left\{ \sum_{j\in P_i} w_{ij} -\sum_{j\in N_i} v_{ij} \right\}$  and 

$D=\sum_{i  \in K}\phi_{i}(0) \sup_{i\in K} \left\{ \sum_{j\in P_i} w_{ij} -\sum_{j\in N_i} v_{ij} \right\}$.
If we  use  Dynkin's formula and Gronwall's inequality, as before, we will get
\[\E[U(X_t)]\leq U(x_0) e^{Bt}+\frac{D}{B}(e^{Bt}-1).\]

In a similar way as in Theorem \ref{theorem1}, for the case (D) we get: $\E[U(X_t)] \leq  U(x_0).$
Putting all the bounds from the  four cases together
leads to \begin{align}\label{Grona}\E[U(X_t)]\leq U(x_0)+(U(x_0) +\frac{D}{B})e^{Bt}.\end{align}
We will now show the non-explosiveness of the system. For $N_{i}(s,t)$ the number of spikes of a single neuron $i$ occurring  during the time interval $[s,t]$, we compute
\begin{align*}\E(N_{i}[s,t])=&\E\int_s^t \phi_i(x^i_u)du\leq  \vert\vert \phi_i \vert \vert _{Lip}\E\int_s^t   \E[U(x_u)]du +(t-s)  \phi_{i} (0). \end{align*}
 
If we use (\ref{Grona}) for $K=\{i\},$ we obtain \begin{align*}\E(N_{i}[s,t])\leq & \frac{(U(x_0) +\frac{D}{B})}{B}(e^t-e^s)+(t-s)\left(  \phi_{i} (0)+U(x_{0}) \right)<\infty.\end{align*}
which shows (i). For (ii), for $N[s,t]$ the number of spikes on the whole system we calculate
\[\E(N [s,t])=\E\int_s^t\sum_{i=1}^{\infty}\phi_i(x^i_u)du\leq \sup_{i\in \N}\vert\vert \phi_i \vert \vert _{Lip}  \int_s^t \E[ U(x_{u})] du+(t-s)\sum_{i=1}^{\infty} \phi_{i} (0).\]
If we use (\ref{Grona}) for $K=\N,$ we obtain
\begin{align*} \E(N [s,t]) \leq &\sup_{i\in \N}\vert\vert \phi_i \vert \vert _{Lip} \left((t-s)  U(x_{0})+(t^{2}-s^2)\frac{ U(x_0)B}{2}+\frac{D}{B}(e^t-e^s) \right)+  \\  & +(t-s)U(x_{0})<\infty,
\end{align*}which shows (ii).
\end{proof}
From the  statement (i) of the last theorem we obtain the following corollary.
\begin{cor}Assume $\phi$ is Lipschitz continuous and that   any of the four  conditions (A),   (C) (D) or (E) holds.
If     for any neuron $i\in \N$, the number of synaptic connections     $\#\{j:   \  W_{ji}\neq 0\}<\infty,$      then any single neuron can spike only finitely many times on a finite time interval. \end{cor}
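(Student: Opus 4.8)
The plan is to reduce everything to part (i) of Theorem \ref{theorem1+} by a direct bookkeeping of the sources of discontinuity of the single coordinate $X^i$. Reading off the dynamics (\ref{eq:dyn}), the map $t\mapsto X^i_t$ evolves continuously through the drift $-a_i\int_0^t g(X^i_s)\,ds$ and is discontinuous only at two kinds of Poisson events: a self-spike of neuron $i$, driven by $N^i$ and resetting $X^i$ to $0$, and a spike of another neuron $j$ that enters through the term $W_{ji}\int N^j$. The second kind moves $X^i$ only when $W_{ji}\neq 0$, so the neurons whose firings can jump $X^i$ are exactly those of the presynaptic set $\{j:W_{ji}\neq 0\}$.

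First I would record the pathwise upper bound
\[
\#\{\text{jumps of } X^i \text{ in } [s,t]\}\;\le\; N_i[s,t]+\sum_{j:\,W_{ji}\neq 0} N_j[s,t],
\]
where $N_j[s,t]$ is the number of spikes of neuron $j$ in $[s,t]$. This is only an inequality—a self-spike, or an incoming negative weight arriving when $X^i=0$, produces no genuine discontinuity—but such coincidences can only lower the true count, so the bound is safe for our purposes.

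Next I would invoke Theorem \ref{theorem1+}(i): under any of conditions (A), (C), (D) or (E), each individual neuron spikes only finitely often on a bounded interval, so $\E\bigl(N_i[s,t]\bigr)<\infty$ and $\E\bigl(N_j[s,t]\bigr)<\infty$ for every fixed $j$, whence each $N_j[s,t]$ is finite almost surely. The decisive structural hypothesis then enters: because $\#\{j:W_{ji}\neq 0\}<\infty$, the right-hand side above is a finite sum of almost surely finite terms, so the number of jumps of $X^i$ in $[s,t]$ is finite.

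Since the whole argument is a reduction to Theorem \ref{theorem1+}(i), I do not expect any genuine analytic difficulty. The only points deserving care are, first, to verify that the two Poisson terms in (\ref{eq:dyn}) really exhaust the possible discontinuities of $X^i$ (the drift term being continuous), and second, to make explicit that it is precisely the finiteness of the connection set $\{j:W_{ji}\neq 0\}$ that upgrades the per-neuron finiteness from Theorem \ref{theorem1+}(i) to finiteness of all incoming influences on $i$.
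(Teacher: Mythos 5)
Your argument is correct and is exactly the reduction the paper intends: the corollary is stated as an immediate consequence of Theorem \ref{theorem1+}(i), and your bookkeeping bound $\#\{\text{jumps of } X^i\}\le N_i[s,t]+\sum_{j:W_{ji}\neq 0}N_j[s,t]$, combined with the finiteness of the presynaptic set, is precisely the (unwritten) justification. The only detail worth noting is that Theorem \ref{theorem1+} formally asks for uniformly bounded Lipschitz constants, though for the single-neuron statement (i) with $K=\{i\}$ this plays no role.
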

It should be noticed that $\#\{j:   \  W_{ji}\neq 0\}<\infty$ is weaker than the PUS condition (\ref{Dob}), since it restricts the number of synapsis leading to a neuron and not the values of the synaptic weight. As a result, one can consider systems like the one presented in the first example of   Examples \ref{par1}, where $\#\{j:   \  W_{ji}\neq 0\}=2<\infty$, but $\vert W_{ij}\vert=i,$  and so the PUS condition is violated since $\sum_{j\in \N} \vert W_{ji} \vert=2i-3\rightarrow \infty$   as $i\rightarrow \infty.$

\subsection{Wasserstein contraction}  \label{s2.2}
We  want to determine conditions for the process to have at most one invariant measure. 

Having established conditions for the non-explosiveness of the process, we will investigate conditions that will guarantee the uniqueness of the invariant measure. To do this we will show the Wasserstein contraction for jump rates that are  Lipschitz continuous. Our approach follows  closely the paper of Duarte, L\"ocherbach and Ost    \cite{D-L-O}. 

For any $x\in \R^\N$ we consider   $\vert \vert x \vert\vert_1=\sum_{i=1}^{\infty}\frac{\vert x_i \vert}{2^{i}}  $.  We define as a coupling between two measures $\mu$ and $\nu$ on $\R^\N$ any probability measure on $\R^\N\otimes \R^\N$ whose marginals are $\mu$ and $\nu$. If we denote $\Gamma (\mu,\nu)$ the set of all couplings between $\mu$ and $\nu$, then define the Wasserstein distance between the two measures $\mu$ and $\nu$ by
\[W_1(\mu,\nu)=\inf \left\{ \int_{\R^\N}  \int_{\R^\N} \vert \vert x-y \vert\vert_1\gamma(dx,dy),\gamma \in \Gamma(\mu,\nu) \right\}.\]
\begin{theorem} \label{theorem2}
 Assume that each $\phi_i $ and  $g $ are Lipschitz continuous. Furthermore, assume   that 
\[  (W1a) \ \  \  \   \  \  \inf_{i\in \N} \left[ a_i - \left(\sum_{j\in N_i,j>i} v_{ij} +\sum_{j\in P_i,j>i} w_{ij} \right)\right]> 0\]
and
\[  (W1b) \ \  \  \   \  \  \inf_{i\in \N} \left[ \frac{a_i }{2^{s}}- \left(\sum_{j\in N_i,j<i} v_{ij} +\sum_{j\in P_i,j<i} w_{ij} \right)\right] >0\]
 where $s =\max\{r>0:W_{i,i-r}>0\}$. 

For every $y\geq x,$ assume: for each $  i\in \N$
\[ (W2) \ \  \  \   \  \ \phi_{i}(y)-\phi_{i}(x)\leq g(y)-g(x),\]
\[ (W3) \ \  \  \   \  \ k_1(y-x)\leq g(y)-g(x)\]
for some constant  $k_1>0$ and  $\forall i\in \N$
\[  (W4) \ \  \  \   \  \ x(\phi_{i}(y)-\phi_{i}(x)) \leq\phi_{i} (y)(y-x). \]
Then, for any choice of two probability measures $\mu$ and $\nu$ on $\B(\R^\N)$
\[W_1(\mu P_t,\nu P_t)\leq ke^{-dt}W_1(\mu,\nu).\]
As a result, under the conditions of Theorem \ref{theorem1} or \ref{theorem1+},  the process X has at most one invariant measure. \end{theorem}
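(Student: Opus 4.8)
The plan is to follow the coupling strategy of \cite{D-L-O}. I would first build the \emph{synchronous coupling} $(X_t,Y_t)$ of two copies of the process \reff{eq:dyn} started from an arbitrary coupling $\gamma\in\Gamma(\mu,\nu)$, both copies being driven by the \emph{same} family of Poisson measures $(N^i)_{i\in\N}$. Writing $V(x,y)=\vert\vert x-y\vert\vert_1=\sum_{i}\vert x^i-y^i\vert/2^i$ for the distance functional that defines $W_1$, the whole estimate reduces to the infinitesimal contraction bound
\[\Ge V(x,y)\leq -d\,V(x,y)\]
for the generator $\Ge$ of the coupled process and some $d>0$; once this is in hand, Dynkin's formula and Gronwall's inequality give $\E[V(X_t,Y_t)]\leq e^{-dt}V(X_0,Y_0)$, and taking the infimum over $\gamma$ yields $W_1(\mu P_t,\nu P_t)\leq k e^{-dt}W_1(\mu,\nu)$.

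Next I would compute $\Ge V$ by splitting it into a drift part and a jump part, and by decomposing each neuron's jump into a \emph{common} part firing at rate $\phi_i(x^i)\wedge\phi_i(y^i)$ in both copies simultaneously, and an \emph{excess} part firing at rate $e_i:=\vert\phi_i(x^i)-\phi_i(y^i)\vert$ in the copy with the larger intensity only. The drift part equals $-\sum_i a_i\vert g(x^i)-g(y^i)\vert/2^i$, which by (W3) is bounded above by $-k_1\sum_i a_i\vert x^i-y^i\vert/2^i$. For the jumps I would argue coordinate by coordinate: because $u\mapsto\max(u+W_{ij},0)$ is $1$-Lipschitz, a \emph{common} spike never increases any $\vert x^j-y^j\vert$ and strictly contracts the firing coordinate (reset to $0$ in both copies), so it contributes a non-positive amount. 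An \emph{excess} spike of neuron $i$ is the only source of growth: at the firing coordinate it changes $\vert x^i-y^i\vert$ by $(2y^i-x^i)$ at rate $e_i$ (in the representative case $x^i\geq y^i$), and condition (W4) is exactly what makes this reset term, \emph{including} the rate mismatch, non-positive; at each neighbour $j\in P_i\cup N_i$ it increases $\vert x^j-y^j\vert$ by at most $\vert W_{ij}\vert$. Using (W2) to bound $e_i\leq g(x^i)-g(y^i)$, the total positive contribution is at most $\sum_i(g(x^i)-g(y^i))\sum_{j\in P_i\cup N_i}\vert W_{ij}\vert/2^j$.

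Collecting the surviving terms and grouping by the firing neuron $i$ gives
\[\Ge V(x,y)\leq -\sum_i \big(g(x^i)-g(y^i)\big)\left[\frac{a_i}{2^i}-\sum_{j\in P_i\cup N_i}\frac{\vert W_{ij}\vert}{2^j}\right].\]
I would then estimate the bracket by factoring out $2^{-i}$ and splitting the neighbour sum at $j=i$: for $j>i$ one has $2^{i-j}\leq 1$, controlled by (W1a), while for $j<i$ one has $2^{i-j}\leq 2^{s}$, which is precisely why the backward weights enter through the factor $a_i/2^{s}$ in (W1b). The two infima being strictly positive then yields $a_i-\sum_{j}\vert W_{ij}\vert\,2^{i-j}\geq c>0$ uniformly in $i$, and together with (W3) this produces $\Ge V\leq -d\,V$ with $d=k_1 c$. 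The main obstacle is not this algebra but the justification of the generator computation in \emph{infinite} dimensions: one must check that $\Ge V$ is well defined and that Dynkin's formula applies, which is where the non-explosiveness established in Theorem \ref{theorem1} / \ref{theorem1+} is essential, since it guarantees that only finitely many spikes occur on any finite interval and keeps $V(X_t,Y_t)$ integrable. I would handle this by the usual localisation argument, proving the estimate first for the truncated generators (cutting off weights with $\vert i-j\vert>k$) and passing to the limit with Fatou's lemma exactly as in the proof of Theorem \ref{theorem1}. Finally, for uniqueness, if $\mu$ and $\nu$ are both invariant then $\mu P_t=\mu$ and $\nu P_t=\nu$, so $W_1(\mu,\nu)\leq k e^{-dt}W_1(\mu,\nu)$ for every $t$; letting $t\to\infty$ forces $W_1(\mu,\nu)=0$, i.e.\ $\mu=\nu$.
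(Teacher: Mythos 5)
Your proposal is correct and follows essentially the same route as the paper: the same coupling generator (common jumps at rate $\phi_i(x^i)\wedge\phi_i(y^i)$ plus excess jumps), the same distance $\Vert x-y\Vert_1$, the same use of (W4) to absorb the reset/rate-mismatch term, (W2) to convert the excess rate into $g$-increments, the split of the neighbour sum at $j=i$ with the $2^{s}$ factor feeding into (W1a)--(W1b), and (W3) to close the Foster--Lyapunov contraction $\Ge_2 H\leq -dH$. Your additional remarks on justifying the generator computation in infinite dimensions via localisation and non-explosiveness, and the explicit $t\to\infty$ argument for uniqueness, are sound refinements of steps the paper leaves implicit.
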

\begin{proof}
We consider  the following coupling generator.
\begin{align}\label{L2_4}\nonumber 
\Ge_2H(x,y)=&-\sum_{i\in \N}a_ig(x^i)\frac{d}{dx^i}H(x,y) -\sum_{i}a_ig(y^i)\frac{d}{dy^i}H(x,y)\\  &  \nonumber+\sum_{i\in \N }  \phi_i (x^i)\wedge \phi_i (y^i) \left[ H ( \Delta_i ( x) , \Delta_i ( y)  ) - H(x,y) \right] \\  &
 \nonumber+\sum_{i\in \N }(  \phi_i (x^i)-\phi_i (y^i))_+ \left[ H ( \Delta_i ( x),y  ) -H(x,y) \right]
 \\  &  \nonumber+\sum_{i\in \N } (  \phi_i (y^i)-\phi_i (x^i))_+ \left[ H (x, \Delta_i ( y)  ) - H(x,y) \right]\\   =:&
 I_1+I_2+I_3+I_4
\end{align}
where $I_1$ above stands for the first two sums on the right hand side representing the drift, and $I_2$, $I_3$ and $I_4$ the remaining three sums. This is a coupling that makes the two processes jump together as much as possible. The $I_2$ term refers to the two processes jumping simultaneously, while $I_3$ and $I_4$ to only the process $X$ and $Y$ jumping correspondingly.

If we set  $H(x,y)=\vert \vert x-y \vert\vert_1$ we will compute the four terms $I_1, I_2,I_3$ and $I_4$. For the drift term, we have
\begin{align*}
I_1= \sum_{i\in \N}(-1)^{1+\I_{x^i<y^i}}a_ig(x^i)+ \sum_{i\in \N}(-1)^{1+\I_{y^i<x^i}}a_ig(y^i),
\end{align*}
while for the remaining three terms,  we respectively obtain
\begin{align*}
I_2 =&  -\sum_{i\in \N }  \phi_i (x^i)\I_{x^i<y^i} \left[ \frac{1}{2^i}(y^i-  x^i ) \right]    -\sum_{i\in \N }  \phi_i (y^i)\I_{y^i<x^i} \left[ \frac{1}{2^i}(x^i-  y^i ) \right],  
\end{align*}
\begin{align*}
I_3=&   \sum_{i\in \N }(  \phi _i(x^i)-\phi _i(y^i))\I_{x^i>y^i} \left[\sum_{j\in N_i} \frac{1}{2^j}\vert  x^j-v_{ij}-y^j \vert+  \sum_{j\in P_i} \frac{1}{2^j}\vert  x^j+w_{ij}-y^j \vert   \right] \\  &  +  \sum_{i\in \N }(  \phi _i(x^i)-\phi _i(y^i))\I_{x^i>y^i}\left[ \frac{1}{2^i}y^i -\frac{1}{2^i}( x^i-y^i )-\sum_{j\neq i,j\in N_i\cup P_i} \frac{1}{2^j}\vert  x^j-y^j \vert \right]
 \end{align*}
 and
 \begin{align*}
I_4=&   \sum_{i\in \N }(  \phi _i(y^i)-\phi _i(x^i))\I_{y^i>x^i}   \left[\sum_{j\in N_i} \frac{1}{2^j}\vert  y^j-v_{ij}-x^j \vert+  \sum_{j\in P_i} \frac{1}{2^j}\vert  y^j+w_{ij}-x^j \vert    \right] \\  &+   \sum_{i\in \N }(  \phi _i(y^i)-\phi _i(x^i))\I_{y^i>x^i} \left[  \frac{1}{2^i}x^i -\frac{1}{2^i}( y^i-x^i )-\sum_{j\neq i,j\in N_i\cup P_i} \frac{1}{2^j}\vert  y^j-x^j \vert \right].
 \end{align*}
In order to bound the last two terms that involve absolute values of sums and differences, we will use the following two simple calculations. At first, observe that  for $v_{ij},w_{ij}\geq 0$ we have
\[\vert  x^j-v_{ij}-y^j \vert-\vert  x^j -y^j \vert\leq v_{ij}\]
and
\[\vert  x^j+w_{ij}-y^j \vert-\vert  x^j -y^j \vert\leq w_{ij}.\]
 If we use these to bound the $I_3$ term, we get
\begin{align*}
I_3\leq   &  \sum_{i\in \N }(  \phi_i (x^i)-\phi_i (y^i))\I_{x^i>y^i}  \left[\sum_{j\in N_i} \frac{v_{ij}}{2^j}+\sum_{j\in P_i} \frac{w_{ij}}{2^j}+\frac{1}{2^i}( 2y^i-x^i ) \right],
 \end{align*}
 while for   $I_4$ we obtain
 \begin{align*}
I_4\leq    \sum_{i\in \N }(  \phi _i(y^i)-\phi_i (x^i))\I_{y^i>x^i}  \left[\sum_{j\in N_i} \frac{v_{ij}}{2^j}+\sum_{j\in P_i}\frac{w_{ij}}{2^j}+\frac{1}{2^i}( 2x^i-y^i ) \right].
 \end{align*}
 Gathering all the bounds for $I_1, I_2,I_3$ and $I_4$  together to bound (\ref{L2_4}), and after rearranging the terms,  we obtain      
 \begin{align*}
\Ge_2H(x,y)\leq &\sum_{i\in \N} \I_{x^i<y^i} \left(c_i(\phi_i(y^i)-\phi_i(x^i))-\frac{a_i}{2^i}(g(y^i)-g(x^i))\right)+ 
\\  & \sum_{i\in \N}\frac{ \I_{x^i<y^i}}{2^i}\left( x^i(\phi_i(y^i)-\phi_i(x^i)) -\phi_i (y^i)(y^i-x^i) \right)+
\\  &\sum_{i\in \N} \I_{y^i<x^i} \left(c_i(\phi_i(x^i)-\phi_i(y^i))-\frac{a_i}{2^i}(g(x^i)-g(y^i))\right)+ 
\\  & \sum_{i\in \N}\frac{ \I_{y^i<x^i}}{2^i}\left(y^i(\phi_i(x^i)-\phi_i(y^i)) -\phi_i (x^i)(x^i-y^i) \right)\\ =:& S_1+S_2+S_3+S_4,
 \end{align*}
 where above,  in order to simplify the exposition, we have  denoted $c_i=\sum_{j\in N_i} \frac{v_{ij}}{2^j}+\sum_{j\in P_i} \frac{w_{ij}}{2^j}$. 
 If we use Condition (W4) to bound the second and the fourth sum, we get 
 \[S_2+S_4\leq 0.\] 
 For the  first   sum we obtain  \begin{align*}S_1  \leq &   \sum _{i\in \N} \I_{x^i<y^i} \left(\frac{1}{2^{i}}( \sum_{j\in N_i,j>i} v_{ij}+\sum_{j\in P_i,j>i} w_{ij}-a_i)(g(y^i)-g(x^i))\right) \\ &+  \sum _{i\in \N} \I_{x^i<y^i} \left(( \sum_{j\in N_i,j<i} \frac{v_{ij}}{2^j}+\sum_{j\in P_i,j<i} \frac{w_{ij}}{2^j}-\frac{a_i}{2^i})(g(y^i)-g(x^i))\right) , \end{align*}
  where above we made use of condition (W2) and then distinguished on the interactions that precede and follow $i$.    Since for $j<i$ we have that $j\geq i-s,$ for  $s=\max\{r>0:W_{i,i-r}>0\}$, the last can be bounded by \begin{align*}S_1 &\leq  \sum _{i\in \N} \I_{x^i<y^i} \left(\frac{1}{2^i}( \sum_{j\in N_i,j>i} v_{ij}+\sum_{j\in P_i,j>i} v_{ij}-a_{i})(g(y^i)-g(x^i))\right) +\\  & +    \sum _{i\in \N} \I_{x^i<y^i} \left(\frac{1}{2^{i}}( \sum _{j\in N_i,j<i} 2^{s}v_{ij}+\sum_{j\in P_i,j<i}2^{s} w_{ij}-a_{i})(g(y^i)-g(x^i))\right).\end{align*}
  Next we can use (W1a) to bound the terms for $j>i$ and (W1b) the terms for  interactions from $j<i$   \begin{align*}S_1 & \leq      - \sum _{i\in \N} \I_{x^i<y^i} \left(\frac{1}{2^{i}}  (O_{1}+2^{s}O_{2})(g(y^i)-g(x^i))\right),\end{align*}
  where we have  denoted 
$O_1=  \inf_{i\in \N} \left[ a_i - \left(\sum_{j\in N_i,j>i} v_{ij} +\sum_{j\in P_i,j>i} w_{ij} \right)\right]$
and

$O_2=  \inf_{i\in \N} \left[ \frac{a _i}{2^s} - \left(\sum_{j\in N_i,j<i} v_{ij} +\sum_{j\in P_i,j<i} w_{ij} \right)\right]$. Next we can use (W3) to obtain
  \begin{align*}S_1& \leq      -k_{1} (O_{1}+2^{s}O_{2})\sum   _{i\in \N} \I_{x^i<y^i} \left(\frac{1}{2^{i}}  ( y^i - x^i)\right)\\ &=-k_{1} (O_{1}+2^{s}O_{2})H(x,y).\end{align*}
If we work similarly for the third sum, we will get 
 \[S_3 \leq -k_{1} (O_{1}+2^{s}O_{2})H(x,y).\] Gathering the   bounds together,\[\Ge_2 H(x,y) \leq -k_{1} (O_{1}+2^{s}O_{2}) H(x,y)\leq-d H(x,y)\]
 for the constant $d=k_{1} (O_{1}+O_{2})> 0$.  If we use  Dynkin's formula, we have 
\[\E_{x,y}H(X_t,\hat X_t)=H(x,y)+\int_0^t\E[\Ge_{2} H(X_s,\hat X_{s})]ds\leq H(x,y)-d \int_0^t\E[H(X_s,\hat X_{s})]ds,\]
which, from Gronwall's inequality (see \cite{Gron}), implies
 \[\E_{x,y}H(X_t,\hat X_t)\leq H(x,y)e^{-td},\] 
 or equivalently 
 \[\E_{x,y}\vert \vert X_t-\hat X_t\vert \vert_{1}\leq \vert \vert x-y\vert \vert _{1}e^{-td}.\] 
 This leads to
  \[W_1( P_t(x),P_t(y))\leq \vert \vert x-y\vert \vert _{1}e^{-td}.\] 
 We  consider now the optimal   $W_1-$coupling   of $\mu$ and $\nu$ and simulate the couple $(x,y)$ according to this. Call this coupling $\gamma$. If we integrate the left and the right hand side of the above inequality with respect to this coupling, we  obtain
 \[\int_{\R^\N}  \int_{\R^\N}W_1( P_t(x),P_t(y))\gamma(dx,dy)\leq e^{-dt} W_1(\mu,\nu) .\]  One notices that this coupling is not generally the optimal  coupling of $\mu P_t$ and $\nu P_t$ and so  
 \[W_1(\mu P_t,\nu P_t)\leq\int_{\R^\N}  \int_{\R^\N}W_1( P_t(x),P_t(y))\gamma(dx,dy),\]  which leads to   
 \[ W_1(\mu P_t,\nu P_t)\leq e^{-dt} W_1(\mu,\nu).\]
 This proves the first assertion of the theorem. For the uniqueness, one can see \cite{D-L-O}.
 \end{proof}
Below, we present   examples that satisfy the conditions of Theorem \ref{theorem2}.

\begin{example} One can immediately observe that the third example presented in Example \ref{par1} satisfies all the conditions of Theorem \ref{theorem2}.

Similarly,  if we set $\phi(x)=g(x)=x$, in any of the examples 1 or 2  presented in     Example  \ref{par1}, we can obtain further paradigms of neural systems  that satisfy the conditions of Theorem \ref{theorem2}, as long as we choose an appropriate drift $a_i$.  For both "example 1"  and  "example 2"  one can easily verify that  an increasing  drift $a_i=2i+1$ is sufficient.    

  \end{example}

 \[\]
 \textbf{Acknowledgements:}
 
 The author would like to thank Prof. Eva  L\"ocherbach.

\end{document}